\theoremstyle{plain}
\newtheorem*{thm A}{Theorem~A}
\newtheorem*{thm B}{Theorem~B}
\newtheorem*{thm C}{Theorem~C}
\newtheorem*{main1}{Theorem~1}
\newtheorem*{main2}{Theorem~2}
\newtheorem{theorem}{Theorem}[section]
\newtheorem{lemma}[theorem]{Lemma}
\newtheorem*{pro A}{Proposition~A}
\newtheorem*{pro B}{Proposition~B}
\def \al{\alpha}
\def \SN{\sum_{\nu=1}^3}
\def \D{\mathfrak D}
\def \QP{{\mathcal Q}^{\bot}}
\def \Q{\mathcal Q}
\def \U{\mathfrak U}
\def \CP{{\mathcal C}^{\bot}}
\def \C{\mathcal C}
\def \RX{R_{\xi}}
\def \RXP{R_{\xi}\phi}
\def \X{X_{0}}
\def \x{\xi}
\def \xo{{\xi}_1}
\def \xt{\xi_{2}}
\def \xh{\xi_{3}}
\def \xtw{{\xi}_2}
\def \xth{{\xi}_3}
\def \xn{\xi_{\nu}}
\def \XN{\xi_{\nu}}
\def \ho{\mathcal H}
\def \tu{\mathcal T}
\def \EN{{\eta}_{\nu}}
\def \ENK{{\eta}_{\nu}({\xi})}
\def \eo{\eta_1}
\def \et{\eta_{2}}
\def \eh{\eta_{3}}
\def \etw{\eta_{2}}
\def \e{\eta}
\def \PN{\phi_{\nu}}
\def \PAX{\phi AX}
\def \PNK{{\phi}_{\nu}{\xi}}
\def \PNP{{\phi}_{\nu}{\phi}}
\def \p{\phi}
\def \po{\phi_{1}}
\def \pn{\phi_{\nu}}
\def \Ph{\phi}
\def \PNK{{\phi}_{\nu}{\xi}}
\def \PNP{{\phi}_{\nu}{\phi}}
\def \PAX{{\phi}AX}
\def \PAY{{\phi}AY}
\def \PKN{{\phi}{\xi}_{\nu}}
\def \PNoK{{\phi}_{{\nu}+1}{\xi}}
\def \PNtK{{\phi}_{{\nu}+2}{\xi}}
\def \qnt{q_{{\nu}+2}}
\def \qno{q_{{\nu}+1}}
\def \span{\text{Span}}
\def \N{\nabla}
\def \Na{\nabla}
\def \be{\beta}
\def \la{\lambda}
\def \si{\sigma}
\def \ka{\kappa}
\def \NBo{SU_{2,m-1}/S(U_2{\cdot}U_{m-1})}
\def \NBt{SU_{2,m}/S(U_{2}{\cdot}U_{m})}
\def \HBt{G^{*}_2({\mathbb C}^{m+2})}
\def \GBt{G_2({\mathbb C}^{m+2})}
\def \CC{{\Bbb C}}
\def \RR{{\Bbb R}}
\def \EN{{\eta}_{\nu}}
\def \ENK{{\eta}_{\nu}({\xi})}
\def \ENoK{{\eta}_{{\nu}+1}({\xi})}
\def \ENtK{{\eta}_{{\nu}+2}({\xi})}
\def \KN{{\xi}_{\nu}}
\begin{document}
\title[Commuting Structure Jacobi Operators]{Real hypersurfaces in complex hyperbolic two-plane Grassmannians with commuting structure Jacobi operators}

\vspace{0.2in}

\author[Hyunjin Lee, Young Jin Suh and Changhwa Woo]{Hyunjin Lee, Young Jin Suh and Changhwa Woo}
\address{\newline
Hyunjin Lee
\newline Research Institute of Real and Complex Manifold,
\newline Kyungpook National University,
\newline Daegu 702-701, REPUBLIC OF KOREA}
\email{lhjibis@hanmail.net}

\address{\newline
Young Jin Suh
\newline Department of Mathematics
\newline and Research Institute of Real and Complex Manifold,
\newline Kyungpook National University,
\newline Daegu 702-701, REPUBLIC OF KOREA}
\email{yjsuh@knu.ac.kr}

\address{\newline
Changhwa Woo
\newline Department of Mathematics,
\newline Kyungpook National University,
\newline Daegu 702-701, REPUBLIC OF KOREA}
\email{legalgwch@naver.com}

\footnotetext[1]{{\it 2010 Mathematics Subject Classification} : Primary 53C40; Secondary 53C15.}
\footnotetext[2]{{\it Key words} : Real hypersurfaces; complex hyperbolic
two-plane Grassmannians, Hopf hypersurface, shape operator, Ricci tensor, structure Jacobi operator, commuting condition.}

\thanks{*This work was supported by Grant Proj. No. NRF-2015-R1A2A1A-01002459 and the third author is supported by NRF Grant funded by the Korean Government (NRF-2013-Fostering Core Leaders of Future Basic Science Program).}

%

\begin{abstract}
In this paper, we introduce a new commuting condition between the structure Jacobi operator and symmetric (1,1)-type tensor field $T$, that is, $R_{\xi}\phi T=TR_{\xi}\phi$, where $T=A$ or $T=S$ for Hopf hypersurfaces in complex hyperbolic two-plane Grassmannians. By using simultaneous diagonalzation for commuting symmetric operators, we give a complete classification of real hypersurfaces in complex hyperbolic two-plane Grassmannians with commuting condition respectively.
\end{abstract}
\maketitle

\section*{Introduction}
\setcounter{equation}{0}
\renewcommand{\theequation}{0.\arabic{equation}}
\vspace{0.13in}

It is one of the main topics in submanifold geometry to investigate immersed real hypersurfaces of homogeneous type in Hermitian symmetric spaces of rank~$2$ (HSS2) with certain geometric conditions.
Understanding and classifying real hypersurfaces in HSS2 is one of important problems in differential geometry. One of these spaces is the complex two-plane Grassmannian $\GBt=SU_{2+m}/S(U_2{\cdot}U_m)$ defined by the set of all complex two-dimensional linear subspaces in ${\mathbb C}^{m+2}$.
Another one is the complex hyperbolic two-plane Grassmannian $\HBt=\NBt$ defined by the set of all complex two-dimensional linear subspaces in indefinite complex Euclidean space ${\Bbb C}_2^{m+2}$.

These are typical examples of HSS2. Characterizing typical model spaces of real hypersurfaces under certain geometric conditions is one of our main interests
in the classification theory in $\GBt$ or $\NBt$ (see \cite{S7} and \cite{S8}).

Our recent interest is the study by applying geometric conditions used in submanifolds in $\GBt$ to submanifolds in $\NBt$.

$\GBt=SU_{2+m}/S(U_2{\cdot}U_m)$ has compact transitive group $SU_{2+m}$, however $\NBt$ has noncompact indefinite transitive group $SU_{2,m}$.
This distinction gives various remarkable results.

The complex hyperbolic two-plane Grassmannian $SU_{2,m}/S(U_2{\cdot}U_m)$ is the unique noncompact, irreducible, K\"{a}hler and quaternionic K\"{a}hler manifold
which is not a hyperk\"{a}hler manifold.

\vskip 3pt
Let $M$ be a real hypersurface in complex hyperbolic two-plane Grassmannian $\NBt$.
Let $N$ be a local unit normal vector field on $M$. Since the complex hyperbolic two-plane Grassmannians $\NBt$ has the K\"{a}hler structure $J$, we may define a {\it Reeb vector field} $\xi =-JN$ and a $1$-dimensional distribution $\CP=\text{Span}\{\,\xi\}$.
\vskip 3pt
Let $\C$ be the orthogonal complement of distribution $\CP$ in $T_{p}M$ at $p\in M$. It is the complex maximal subbundle of $T_{p}M$.
Thus the tangent space of $M$ consists of the direct sum of $\C$ and $\CP$ as follows: $T_{p}M =\C\oplus \CP$.
The real hypersurface $M$ is said to be {\it Hopf} if $A\C\subset\C$, or equivalently,
the Reeb vector field $\x$ is principal with principal curvature $\al=g(A\x,\x)$, where $g$ denotes the metric. In this case,
the principal curvature $\al$ is said to be a {\it Reeb curvature} of~$M$.
\vskip 3pt
From the quaternionic K\"{a}hler structure $\mathfrak J=\span\{J_1, J_2, J_3\}$ of $\NBt$, there naturally exist {\it almost contact 3-structure} vector fields $\xi_{\nu}=-J_{\nu}N$, $\nu=1,2,3$.
Let $\QP = \text{Span}\{\,\xi_1, \xi_2, \xi_3\}$. It is a 3-dimensional distribution in the tangent space $T_{p}M$ of $M$ at $p \in M$. In addition, $\Q$ stands for the orthogonal complement of $\QP$ in $T_{p}M$.
It is the quaternionic maximal subbundle of $T_{p}M$. Thus the tangent space of $M$ can be splitted into $\Q$ and $\QP$ as follows: $T_{p}M =\Q\oplus \QP$.
\vskip 3pt
Thus, we have considered two natural geometric conditions for real hypersurfaces in
$SU_{2,m}/S(U_2{\cdot}U_m)$ such that the subbundles $\mathcal C$ and $\Q$ of $TM$ are both invariant
under the shape operator.  By using these geometric conditions, we will use the results in Suh \cite[Theorem 1]{S7}.
\vskip 6pt
\par

On the other hand, a Jacobi field along geodesics of a given Riemannian manifold $(\bar M, \bar g)$ plays an important role in the study of differential geometry. It satisfies a well-known differential equation which inspires Jacobi operators. It is defined by $(\bar R_{X}(Y))(p)= (\bar R(Y, X)X)(p)$, where $\bar R$ denotes the curvature tensor of~$\bar M$ and $X$,~$Y$ denote any vector fields on $\bar M$. It is known to be a self-adjoint endomorphism on the tangent space $T_{p}\bar M$, $p \in \bar M$. Clearly, each tangent vector field $X$ to $\bar M$ provides a Jacobi operator with respect to $X$. Thus the Jacobi operator on a real hypersurface $M$ of $\bar M$ with respect to $\x$ is said to be a {\it structure Jacobi operator} and will be denoted by $R_{\xi}$. The Riemannian curvature tensor of $M$ (resp., $\bar M$) is denoted by $R$ (resp., $\bar R$).

\vskip 3pt
%

For a commuting problem concerned with the structure Jacobi operator $R_{\xi}$ and the structure tensor $\p$ of Hopf hypersurface $M$ in $\GBt$, that is, $\RXP A= A\RXP$, Lee, Suh and Woo \cite{LSW} proved that
a Hopf hypersurface $M$ with $\RXP A= A\RXP$ and $\x\al=0$ is locally congruent to an open part of a tube around a totally geodesic
$G_2({\mathbb C}^{m+1})$ in $G_2({\mathbb C}^{m+2})$. Motivated by this result, we consider the same condition in the different ambient space, that is,
\begin{equation}\label{C-1}
R_{\xi}\p  AX =A R_{\xi}\p X
\tag{C-1}
\end{equation}
for any tangent vector field $X$ on $M$ in $\NBt$. The geometric meaning of $R_{\xi}\p  AX =A R_{\xi}\p X$ can be explained in such a way that any eigenspace of $R_{\x}$ on the distribution ${\C}=\{X \in T_{p}M \mid X\perp \xi\}$, $p \in M$,
is invariant under the shape operator $A$ of $M$ in $\NBt$. Then by using \cite[Theorem 1]{S7}, we give a complete classification of Hopf hypersurfaces in $\NBt$ with $R_{\xi}\p  AX =A R_{\xi}\p X$ as follows:
\vskip 3pt
\begin{main1}
Let $M$ be a Hopf hypersurface in complex hyperbolic two-plane Grassmannians $\NBt$, $m \geq 3$ with  $\RXP  A=A \RXP$.
If the Reeb curvature $\al=g(A\x, \x)$ is constant along the Reeb direction of the structure vector field $\x$, then $M$ is locally congruent to one of the following:
\begin{enumerate}[\rm (i)]
\item {a tube over a totally geodesic $\NBo$ in $\NBt$ or}
\item {a horosphere in $\NBt$ whose center at infinity is singular and of type $JX \in {\mathfrak J}X$.}
\end{enumerate}
\end{main1}

From the Riemannian curvature tensor $R$ of $M$ in $\NBt$ we can define the Ricci tensor $S$ of $M$ in such a way that
$$g(SX,Y)={\sum}_{i=1}^{4m-1}g(R(e_i,X)Y,e_i),$$ where $\{e_1, {\cdots}, e_{4m-1}\}$
denotes a basis of the tangent space $T_pM$ of $M$, $p{\in}M$, in $\NBt$ (see~\cite{SW}).
Then we can consider another new commuting condition
\begin{equation}\label{C-2}
\RXP SX = S \RXP X
\tag{C-2}
\end{equation}
for any tangent vector field $X$ on $M$. That is, the operator $\RXP$ commutes with the Ricci tensor $S$.

Then by \cite[Theorem 1]{S7}, we also give another classification related to the Ricci tensor $S$ of $M$ in $\NBt$ as follows:

\begin{main2}
Let $M$ be a Hopf hypersurface in complex hyperbolic two-plane Grassmannians $\NBt$, $m \geq 3$ with $\RXP  S=S \RXP$.
If the smooth function $\al=g(A\x, \x)$ is constant along the direction of $\x$, then $M$ is locally congruent to one of the following:
\begin{enumerate}[\rm (i)]
\item {a tube over a totally geodesic $\NBo$ in $\NBt$ or}
\item {a horosphere in $\NBt$ whose center at infinity is singular and of type $JX \in {\mathfrak J}X$.}
\end{enumerate}
\end{main2}

\vskip 3pt

\vskip 3pt

In this paper, we refer \cite{PeSW}, \cite{S7}, \cite{S8} and \cite{SW} for Riemannian geometric structures of complex hyperboilc two-plane Grassmannians $\NBt$, $m \geq 3$.
\vskip 15pt
%
\section{The complex hyperbolic two-plane Grassmannian $\NBt$}\label{section 1}
\setcounter{equation}{0}
\renewcommand{\theequation}{1.\arabic{equation}}
\vspace{0.13in}

In this section we summarize basic material about complex hyperbolic
two-plane Grassmann manifolds $\NBt$, for details we refer to
 \cite{PeSuWa}, \cite{S5}, \cite{S7} and \cite{SW}. The Riemannian symmetric space
$SU_{2,m}/S(U_2{\cdot}U_m)$, which consists of all complex
two-dimensional linear subspaces in indefinite complex Euclidean
space $\CC_2^{m+2}$ is a connected, simply connected,
irreducible Riemannian symmetric space of noncompact type and with
rank two. Let $G = SU_{2,m}$ and $K = S(U_2{\cdot}U_m)$, and denote
by ${\frak g}$ and ${\frak k}$ the corresponding Lie algebra of the
Lie group $G$ and $K$  respectively. Let $B$ be the Killing form of
${\frak g}$ and denote by ${\frak p}$ the orthogonal complement of
${\frak k}$ in ${\frak g}$ with respect to $B$. The resulting
decomposition ${\frak g} = {\frak k} \oplus {\frak p}$ is a Cartan
decomposition of ${\frak g}$. The Cartan involution $\theta \in
{\text Aut}({\frak g})$ on ${\frak s}{\frak u}_{2,m}$ is given by
$\theta(A) = I_{2,m} A I_{2,m}$, where
\begin{equation*}
\begin{split}
I_{2,m} = \begin{pmatrix}
-I_{2} & 0_{2,m} \\
0_{m,2} & I_{m} \end{pmatrix},
\end{split}
\end{equation*}
$I_2$ and $I_m$ denote the identity $2 \times 2$-matrix and $m
\times m$-matrix respectively. Then $< X , Y > = -B(X,\theta Y)$
becomes a positive definite ${\text Ad}(K)$-invariant inner product
on ${\frak g}$. Its restriction to ${\frak p}$ induces a metric $g$
on $SU_{2,m}/S(U_2{\cdot}U_m)$, which is also known as the Killing
metric on $SU_{2,m}/S(U_2{\cdot}U_m)$. Throughout this paper we
consider $SU_{2,m}/S(U_2{\cdot}U_m)$ together with this particular
Riemannian metric $g$.
\par
\vskip 6pt The Lie algebra ${\frak k}$ decomposes orthogonally into
${\frak k}  = {\frak s}{\frak u}_2 \oplus {\frak s}{\frak u}_m
\oplus {\frak u}_1$, where ${\frak u}_1$ is the one-dimensional
center of ${\frak k}$. The adjoint action of ${\frak s}{\frak u}_2$
on ${\frak p}$ induces the quaternionic K\"{a}hler structure ${\frak
J}$ on $SU_{2,m}/S(U_2{\cdot}U_m)$, and the adjoint action of
\begin{equation*}
\begin{split}
Z = \begin{pmatrix}
 \frac{mi}{m+2}I_2 & 0_{2,m} \\
 0_{m,2} & \frac{-2i}{m+2}I_m
 \end{pmatrix}
 \in {\frak u}_1
\end{split}
\end{equation*}
induces the K\"{a}hler structure $J$ on $SU_{2,m}/S(U_2{\cdot}U_m)$.
By construction, $J$ commutes with each almost Hermitian structure
$J_{\nu}$ in ${\frak J}$ for ${\nu}=1,2,3$. Recall that a canonical
local basis $\{J_1,J_2,J_3\}$ of a quaternionic K\"{a}hler structure
${\frak J}$ consists of three almost Hermitian structures
$J_1,J_2,J_3$ in ${\frak J}$ such that $J_{\nu} J_{{\nu}+1} = J_{{\nu} +
2} = - J_{{\nu}+1} J_{\nu}$, where the index ${\nu}$ is to be taken modulo
$3$. The tensor field $JJ_{\nu}$, which is locally defined on
$SU_{2,m}/S(U_2{\cdot}U_m)$, is self-adjoint and satisfies
$(JJ_{\nu})^2 = I$ and ${\text tr}(JJ_{\nu}) = 0$, where $I$ is the
identity transformation. For a nonzero tangent vector $X$, we define
${\Bbb R}X = \{\lambda X \vert \lambda \in {\Bbb R}\}$, ${\Bbb C}X =
{\Bbb R}X \oplus {\Bbb R}JX$, and ${\Bbb H}X = {\Bbb R}X \oplus
{\frak J}X$.
\par
\vskip 6pt
We identify the tangent space $T_oSU_{2,m}/S(U_2{\cdot}U_m)$ of
$SU_{2,m}/S(U_2{\cdot}U_m)$ at $o$ with ${\frak p}$ in the usual way. Let
${\frak a}$ be a maximal abelian subspace of ${\frak p}$. Since
$SU_{2,m}/S(U_2{\cdot}U_m)$ has rank two, the dimension of any such
subspace is two. Every nonzero tangent vector $X \in
T_oSU_{2,m}/S(U_2{\cdot}U_m) \cong {\frak p}$ is contained in some maximal
abelian subspace of ${\frak p}$. Generically this subspace is
uniquely determined by $X$, in which case $X$ is called regular. If
there exist more than one maximal abelian subspaces of ${\frak p}$
containing $X$, then $X$ is called singular. There is a simple and
useful characterization of the singular tangent vectors: A nonzero
tangent vector $X \in {\frak p}$ is singular if and only if $JX \in
{\frak J}X$ or $JX \perp {\frak J}X$.
\par
\vskip 6pt
 Up to scaling there exists a unique $SU_{2,m}$-invariant Riemannian metric $g$ on $\NBt$. Equipped with this
metric, $\NBt$ is a Riemannian symmetric space of rank two which is
both K\"ahler and quaternionic K\"ahler. For computational reasons
we normalize $g$ such that the minimal sectional curvature of
$({\NBt},g)$ is $-4$. The sectional curvature $K$ of the noncompact
symmetric space $SU_{2,m}/S(U_2{\cdot}U_m)$ equipped with the
Killing metric $g$ is bounded by $-4{\leq}K{\leq}0$. The sectional
curvature $-4$ is obtained for all two-planes ${\Bbb C}X$ when $X$
is a non-zero vector with $JX \in {\frak J}X$.
\par
\vskip 6pt

When $m=1$,  $G_2^{*}(\CC^3)=SU_{1,2}/S(U_1{\cdot}U_2)$ is isometric
to the two-dimensional complex hyperbolic space $\CC H^2$ with
constant holomorphic sectional curvature $-4$.
\par
When $m=2$, we note that the isomorphism $SO(4,2)\simeq SU_{2,2}$ yields an isometry between $G_2^{*}(\CC ^4)=SU_{2,2}/S(U_2{\cdot}U_2)$ and
the indefinite real Grassmann manifold $G_2^{*}(\RR_2^6)$ of oriented
two-dimensional linear subspaces of an indefinite Euclidean space $\RR_2^6$.
For this reason we
assume $m \geq 3$ from now on, although many of the subsequent
results also hold for $m = 1,2$.
\par
\vskip 6pt

From now on, hereafter $X$,$Y$ and $Z$ always stand for any tangent vector fields on $M$.

The Riemannian curvature tensor $\bar{R}$ of $\NBt$ is locally given
by
\begin{equation*}
\begin{split}
-2\bar{R}(X,Y)Z = &g(Y,Z)X - g(X,Z)Y  +  g(JY,Z)JX\\
&\qquad - g(JX,Z)JY- 2g(JX,Y)JZ \\
&\qquad  + \SN \{g(J_{\nu} Y,Z)J_{\nu} X - g(J_{\nu} X,Z)J_{\nu}Y- 2g(J_{\nu} X,Y)J_{\nu} Z\}\\
&\qquad  + \SN \{g(J_{\nu} JY,Z)J_{\nu} JX - g(J_{\nu} JX,Z)J_{\nu}JY\},
\end{split}
\end{equation*}
where $\{J_1,J_2,J_3\}$ is any canonical local basis of ${\frak J}$.

\par
\vskip 20pt

\section{Fundamental formulas in $\NBt$}
\setcounter{equation}{0}
\renewcommand{\theequation}{2.\arabic{equation}}
\vspace{0.13in}

In this section, we derive some basic formulas and the Codazzi
equation for a real hypersurface in $\NBt$ (see
 \cite{S7}, \cite{S8} and \cite{SW}).
\par
\vskip 6pt
Let $M$ be a real hypersurface in complex hyperbolic two-plane Grassmannian $\NBt$, that is, a hypersurface in
$\NBt$ with real codimension one. The induced Riemannian metric on
$M$ will also be denoted by $g$, and $\nabla$ denotes the Levi
Civita covariant derivative of $(M,g)$. We denote by $\mathcal C$
and $\mathcal Q$ the maximal complex and quaternionic subbundle of
the tangent bundle $TM$ of $M$, respectively. Now let us put
\begin{equation}\label{eq: 2.1}
JX={\phi}X+{\eta}(X)N,\quad J_{\nu}X={\phi}_{\nu}X+{\eta}_{\nu}(X)N
\end{equation}
for any tangent vector field $X$ of a real hypersurface $M$ in
$\NBt$, where ${\phi}X$ denotes the tangential component of $JX$ and
$N$ a unit normal vector field of $M$ in $\NBt$.
\par
\vskip 6pt
From the K\"{a}hler structure $J$ of $\NBt$ there exists
an almost contact metric structure $(\phi,\xi,\eta,g)$ induced on
$M$ in such a way that
\begin{equation}\label{eq: 2.2}
\phi^{2}X=-X+\eta(X)\xi,\quad \eta(\xi)=1,\quad \phi \xi =0, \quad \eta(X)=g(X,\xi)
\end{equation}
for any vector field $X$ on $M$. Furthermore, let $\{J_1,J_2,J_3\}$ be a canonical local basis of ${\mathfrak J}$. Then the quaternionic K\"{a}hler structure $J_{\nu}$ of $\NBt$, together with the condition $J_{\nu}J_{{\nu}+1} = J_{{\nu}+2} = -J_{{\nu}+1}J_{\nu}$ in section~$1$, induces an almost contact metric 3-structure $(\phi_{\nu}, \xi_{\nu}, \eta_{\nu}, g)$ on $M$ as follows:
\begin{equation}\label{eq: 2.3}
\begin{split}
& \phi_{\nu}^{2}X=-X+\eta_{\nu}(X)\xi_{\nu},\quad \eta_{\nu}(\xi_{\nu})=1, \quad  \phi_{\nu} \xi_{\nu} =0, \\
&{\phi}_{{\nu} +1}{\xi}_{\nu}=-{\xi}_{{\nu}+2},\quad {\phi}_{\nu}{\xi}_{{\nu}+1}={\xi}_{{\nu}+2},\\
&{\phi}_{\nu}{\phi}_{{\nu}+1}X
= {\phi}_{{\nu}+2}X+{\eta}_{{\nu}+1}(X){\xi}_{\nu},\\
&{\phi}_{{\nu}+1}{\phi}_{\nu}X=-{\phi}_{{\nu}+2}X+{\eta}_{\nu}(X)
{\xi}_{{\nu}+1}
\end{split}
\end{equation}
for any vector field $X$ tangent to $M$. Moreover, from the commuting property of $J_{\nu}J=JJ_{\nu}$, ${\nu}=1,2,3$ in section~\ref{section 1} and (\ref{eq: 2.1}), the relation between these two contact metric structures $(\phi, \xi, \eta, g)$ and $(\phi_{\nu}, \xi_{\nu}, \eta_{\nu}, g)$, $\nu=1,2,3$, can be given by
\begin{equation}\label{eq: 2.4}
\begin{split}
&{\Ph}{\Ph}_{\nu}X  ={\Ph}_{\nu}{\Ph}X+{\eta}_{\nu}(X){\xi}-{\eta}(X)
{\xi}_{\nu},\\
&\eta_{\nu}(\phi X) = \eta(\phi_{\nu}X), \quad {\phi}{\xi}_{\nu}={\phi}_{\nu}{\xi}.
\end{split}
\end{equation}

\def \E{\eta}

\noindent On the other hand, from the parallelism of K\"{a}hler structure $J$,~that is,~$\widetilde \nabla J=0$ and the quaternionic K\"{a}hler structure $\mathfrak J$, together with Gauss and Weingarten formulas, it follows that
\begin{equation}\label{eq: 2.5}
({\N}_X{\Ph})Y={\E}(Y)AX-g(AX,Y){\xi},\quad {\nabla}_X{\xi}={\Ph}AX,
\end{equation}
\begin{equation}\label{eq: 2.6}
{\N}_X{\xi}_{\nu}=q_{{\nu}+2}(X){\xi}_{{\nu}+1}-q_{{\nu}+1}(X)
{\xi}_{{\nu}+2}+{\Ph}_{\nu}AX,
\end{equation}
\begin{equation}\label{eq: 2.7}
\begin{split}
({\N}_X{\Ph}_{\nu})Y&=-q_{{\nu}+1}(X){\Ph}_{{\nu}+2}Y+q_{{\nu}+2}(X)
{\Ph}_{{\nu}+1}Y +{\E}_{\nu}(Y)AX\\
&\quad \ \  -g(AX,Y){\xi}_{\nu}.
\end{split}
\end{equation}
\par
\noindent Combining these formulas, we find the following:
\begin{equation} \label{eq: 2.8}
\begin{split}
{\N}_X({\Ph}_{\nu}{\xi})&={\N}_X({\Ph}{\xi}_{\nu})\\
&=({\N}_X{\Ph}){\xi}_{\nu}+{\Ph}({\N}_X{\xi}_{\nu})\\
&=q_{{\nu}+2}(X){\Ph}_{{\nu}+1}{\xi}-q_{{\nu}+1}(X){\Ph}_{{\nu}+2}
{\xi}+{\Ph}_{\nu}{\Ph}AX\\
&\ \ \ -g(AX,{\xi}){\xi}_{\nu}+{\E}({\xi}_{\nu})AX.
\end{split}
\end{equation}

 Finally, using the explicit expression for the Riemannian curvature tensor $\bar{R}$ of
$\NBt$ in \cite{S8}, the Codazzi equation takes the form
\begin{equation}\label{eq: 2.9}
\begin{split}
-2(\nabla_XA)Y &+2 (\nabla_YA)X
 =\eta(X)\phi Y - \eta(Y)\phi X - 2g(\phi X,Y)\xi \\
& \qquad + \SN \big\{\eta_{\nu}(X)\phi_{\nu} Y - \eta_{\nu}(Y)\phi_{\nu}
 X - 2g(\phi_{\nu} X,Y)\xi_{\nu}\big\} \\
& \qquad + \SN \big\{\eta_{\nu}(\phi X)\phi_{\nu}\phi Y
- \eta_{\nu}(\phi Y)\phi_{\nu}\phi X\big\} \\
& \qquad + \SN \big\{\eta(X)\eta_{\nu}(\phi Y)
- \eta(Y)\eta_{\nu}(\phi X)\big\}\xi_{\nu},
\end{split}
\end{equation}
for any vector fields $X$ and $Y$ on $M$.

On the other hand, by differentiating $A{\xi}={\alpha}{\xi}$ and using \eqref{eq: 2.9}, we get
the following
\begin{equation}\label{eq: 2.10}
\begin{split}
g(&{\phi}X,Y)-{\SN}\{{\EN}(X){\EN}({\phi}Y)-{\EN}(Y){\EN}({\phi}X)-g({\PN}X,Y){\EN}({\xi})\}\\
&=g(({\Na}_XA)Y-({\Na}_YA)X,{\xi})\\
&=g(({\Na}_XA){\xi},Y)-g(({\Na}_YA){\xi},X)\\
&=(X{\alpha}){\eta}(Y)-(Y{\alpha}){\eta}(X)+{\alpha}g((A{\phi}+{\phi}A)X,Y)-2g(A{\phi}AX,Y).\\
\end{split}
\end{equation}
Putting $X={\xi}$ gives
\begin{equation}\label{eq: 2.10-1}
Y{\alpha}=({\xi}{\alpha}){\eta}(Y)+2{\SN}{\EN}({\xi}){\EN}({\phi}Y).
\end{equation}
Then, substituting \eqref{eq: 2.10-1} into \eqref{eq: 2.10}
the above equation, we have the following
\begin{equation}\label{eq: 2.11}
\begin{split}
A{\phi}AY&=\frac{\alpha}{2}(A{\phi}+{\phi}A)Y
+{\SN}\big\{{\eta}(Y){\EN}({\xi}){\phi}{\KN}+{\EN}({\xi}){\EN}({\phi}Y){\xi}\big\}\\
&\quad-\frac{1}{2}{\phi}Y-\frac{1}{2}{\SN}\big\{{\EN}(Y){\phi}{\KN}+{\EN}({\phi}Y){\KN}+{\EN}({\xi}){\PN}Y\big\}.
\end{split}
\end{equation}

\noindent By differentiating and using (\ref{eq: 2.4}), (\ref{eq: 2.5}) and (\ref{eq: 2.6}), we have
\begin{equation*}
\begin{split}
{\nabla}_X({\rm grad}\ {\alpha})& = X({\xi}{\alpha}){\xi}+({\xi}{\alpha}){\phi}AX\\
&\quad -2{\SN}\Big\{{\qnt}(X){\ENoK}-{\qno}(X){\ENtK}+2{\EN}({\PAX})\Big\}{\PKN}\\
&\quad -2{\SN}{\ENK}\Big\{-{\qno}(X){\PNtK}+{\qnt}(X){\PNoK}+{\ENK}AX\\
&\quad\quad\quad\quad\quad\quad\quad -g(AX,{\xi}){\KN}+{\PNP}AX\Big\}\\
&=X({\xi}{\alpha}){\xi}+({\xi}{\alpha}){\phi}AX-4{\SN}\EN(\PAX){\PKN}\\
&\quad -2{\SN}{\ENK}\Big\{{\ENK}AX-g(AX,{\xi}){\KN}+{\PNP}AX \Big\}.
\end{split}
\end{equation*}

\noindent By taking the skew-symmetric part to the above equation, we have
\begin{equation*}
\begin{split}
0&=X({\xi}{\alpha}){\eta}(Y)-Y({\xi}{\alpha}){\eta}(X)+({\xi}{\alpha})g\big((A{\phi}+{\phi}A)X,Y\big)\\
&\quad -4{\SN}\Big\{{\EN}({\phi}AX)g({\PKN},Y)-{\EN}({\PAY})g({\PKN},X)\Big\}\\
&\quad +2{\alpha}{\SN}{\ENK}\Big\{{\eta}(X){\EN}(Y)-{\eta}(Y){\EN}(X)\Big\}\\
&\quad -2{\SN}{\ENK}\Big\{g({\PNP}AX,Y)-g({\PNP}AY,X)\Big\}.
\end{split}
\end{equation*}

\noindent From this, by putting $X={\xi}$ we have the following
\begin{equation}\label{eq: 2.12}
Y({\xi}{\alpha})={\xi}({\xi}{\alpha}){\eta}(Y)+2{\alpha}{\SN}{\ENK}{\EN}(Y)-2{\SN}{\ENK}{\EN}(AY).
\end{equation}

\noindent From this, if we assume that ${\xi}{\alpha}=0$, then it follows that
\begin{equation*}
{\SN}{\ENK}{\EN}(AX)={\alpha}{\SN}{\ENK}{\EN}(X).
\end{equation*}

\par
\vskip 6pt

\begin{lemma}\label{lemma 2.2}
Let $M$ be a Hopf real hypersurface in $\NBt$. If the principal curvature $\alpha$ is constant
along the direction of $\xi$, then the distribution $\Q$ or $\QP$ component of the structure vector field
$\xi$ is invariant by the shape operator.
\end{lemma}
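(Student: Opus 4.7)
The plan is to read off the lemma from the identity derived just above its statement. Under the assumption that $\x\al=0$, the computation preceding the lemma yields
$$\SN \ENK \, \EN(AX) = \al \, \SN \ENK \, \EN(X)$$
for every tangent vector field $X$ on $M$.

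First I would write $\x = \x_{\Q} + \x_{\QP}$ along the orthogonal splitting $T_pM = \Q \oplus \QP$, noting that the $\QP$-projection is explicitly $\x_{\QP} = \SN \ENK \, \KN$ (by orthonormality of $\Ko,\Kt,\Ks$ together with $\ENK = g(\x,\KN)$). Consequently $g(Y, \x_{\QP}) = \SN \ENK \, \EN(Y)$ for every $Y$, so the displayed identity is precisely $g(AX, \x_{\QP}) = \al\, g(X, \x_{\QP})$ for all $X$. Because $A$ is self-adjoint, this forces $A \x_{\QP} = \al\, \x_{\QP}$, which means that the one-dimensional subspace $\RR \x_{\QP}$ is $A$-invariant.

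To finish, I would combine the Hopf condition $A\x = \al\x$ with the decomposition $\x = \x_{\Q} + \x_{\QP}$ and the just-established relation $A \x_{\QP} = \al \x_{\QP}$; subtracting yields $A \x_{\Q} = \al \x_{\Q}$, so the $\Q$-component of $\x$ is likewise invariant under $A$. I do not anticipate any substantive obstacle here: the whole argument is simply a reinterpretation of the derived identity as an eigenvalue equation for $A$ via self-adjointness, and the two components automatically share the common eigenvalue $\al$ thanks to the Hopf hypothesis. The only point requiring a little care is verifying that $\x_{\QP} = \SN \ENK \, \KN$ is indeed the correct orthogonal projection of $\x$ onto $\QP$, which is immediate from the definition of $\ENK$.
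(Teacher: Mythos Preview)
Your argument is correct and is exactly the route the paper intends: the lemma is stated immediately after the identity $\SN\ENK\,\EN(AX)=\al\,\SN\ENK\,\EN(X)$ without further proof, and your reinterpretation of that identity as $g(AX,\x_{\QP})=\al\,g(X,\x_{\QP})$, followed by self-adjointness of $A$ and subtraction using $A\x=\al\x$, supplies precisely the details the paper leaves implicit.
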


%

\section{Proof of Theorem 1}\label{section 3}
\setcounter{equation}{0}
\renewcommand{\theequation}{3.\arabic{equation}}

\vspace{0.13in}

Let $M$ be a Hopf hypersurface in $\NBt$ with
\begin{equation}
R_{\xi}\p AX = A R_{\xi}\p X.
\tag{C-1}
\end{equation}
The structure Jacobi operator $R_{\xi}$ of $M$ is defined by $R_{\xi}X=R(X,\xi)\xi$ for any tangent vector $X \in T_{p}M$, $p \in M$ (see \cite{JPS} and \cite{PMS}).
Then for any tangent vector field $X$ on $M$ in $\NBt$, we calculate the structure Jacobi operator $R_{\xi}$
\begin{equation}\label{eq: 3.2}
\begin{split}
2R_{\xi}(X)&=2R(X,{\xi}){\xi}\\
&= -X + {\eta}(X){\xi} + {\SN}\big\{{\EN}(X){\KN}-{\eta}(X){\ENK}{\KN}\\
&\ \ +3\EN(\p X){\PNK}+{\ENK}{\PNP}X\big\}+2{\alpha}AX-2{\eta}(AX)A{\xi},
\end{split}
\end{equation}
where $\alpha$ denotes the Reeb curvature defined by $g(A{\xi},{\xi})$.


\vskip 3pt

\begin{lemma}\label{lemma 3.1}
Let $M$ be a Hopf hypersurface in $\NBt$ with the commuting condition $R_{\xi}\p AX = A R_{\xi}\p X$.
If the smooth function $\alpha$ is constant along the direction of $\x$ on $M$,
then the Reeb vector field $\x$ belongs to either the distribution $\Q$ or the distribution~$\QP$.
\end{lemma}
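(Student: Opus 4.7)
The plan is to argue by contradiction. Suppose $\xi$ has nontrivial components in both $\mathcal{Q}$ and $\mathcal{Q}^{\bot}$. Choosing a canonical local basis $\{J_1,J_2,J_3\}$ of the quaternionic K\"ahler structure so that the $\mathcal{Q}^{\bot}$-component of $\xi$ is aligned with $\xi_1$, one may write
\begin{equation*}
\xi \;=\; \eta(X_0)\,X_0 \;+\; \eta(\xi_1)\,\xi_1,\qquad \eta(X_0)\neq 0,\ \eta(\xi_1)\neq 0,
\end{equation*}
for some unit vector $X_0\in\mathcal{Q}$. With this normalization $\eta_2(\xi)=\eta_3(\xi)=0$ and $\eta_1(\xi)=\eta(\xi_1)$, which sharply reduces the sums appearing in (3.2) and (2.11).

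First I would extract the consequences of $\xi\alpha=0$. Substituting into (2.12) yields
$\sum_{\nu}\eta_\nu(\xi)\eta_\nu(AY)=\alpha\sum_{\nu}\eta_\nu(\xi)\eta_\nu(Y)$
for every tangent vector $Y$; under the chosen basis this collapses to $\eta(\xi_1)\,g(AY,\xi_1)=\alpha\,\eta(\xi_1)\,g(Y,\xi_1)$, so that $A\xi_1=\alpha\xi_1$. Combining this with the Hopf condition $A\xi=\alpha\xi$ and with the decomposition of $\xi$ immediately forces $AX_0=\alpha X_0$ as well. Thus both $X_0$ and $\xi_1$ are principal directions of $A$ sharing the Reeb curvature~$\alpha$.

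Next I would exploit the commuting condition (C-1). Its geometric interpretation, recorded just before Theorem~1 in the introduction, says that every eigenspace of $R_\xi$ lying in $\mathcal{C}$ is invariant under the shape operator. Using (3.2) with the simplifications above, I would evaluate $R_\xi$ on vectors such as $\phi X_0$ and $\phi\xi_2$, both of which lie in $\mathcal{C}$ since $\phi\xi=0$. After substituting $AX_0=\alpha X_0$, $A\xi_1=\alpha\xi_1$ and eliminating the resulting $A\phi AY$ terms via (2.11), the commuting condition should produce a relation whose leading behaviour is proportional to $\eta(X_0)\eta(\xi_1)$ times a concrete vector in $\mathcal{C}$. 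Projecting this relation onto a suitably chosen direction (e.g.\ $\phi\xi_2$ or $X_0$ itself) will then force $\eta(X_0)\eta(\xi_1)=0$, contradicting the non-degeneracy assumption.

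The main obstacle is the algebraic book-keeping. Even with $\eta_2(\xi)=\eta_3(\xi)=0$, the right hand side of (3.2) still contains the mixed sum $\sum_\nu\eta_\nu(\xi)\phi_\nu\phi X$, and the interchange relations (2.4) redistribute components between $\mathcal{Q}$ and $\mathcal{Q}^{\bot}$; one must carefully track which parts of each term survive and how $A$ acts on them. A subtle point is to verify that the coefficient produced after projection is nonvanishing independently of $\alpha$. Should a degenerate subcase appear in which that coefficient vanishes for a particular value of $\alpha$, the argument will be completed by testing (C-1) on a second vector (for instance $\phi_2 X_0$) and comparing the two resulting relations.
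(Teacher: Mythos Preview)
Your strategy coincides with the paper's: write $\xi=\eta(X_0)X_0+\eta(\xi_1)\xi_1$, use $\xi\alpha=0$ together with (2.12) and the Hopf condition to get $AX_0=\alpha X_0$ and $A\xi_1=\alpha\xi_1$, and then extract a contradiction from (C-1) by evaluating on a well-chosen vector in $\mathcal C$. Two points deserve sharpening.

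\textbf{(1) You must separate the cases $\alpha\neq 0$ and $\alpha=0$.} On $\mathfrak U=\{\alpha\neq 0\}$ the paper first feeds $Y=X_0$ into (2.11) and divides by $\alpha$ to obtain $A\phi X_0=\sigma\,\phi X_0$ with $\sigma=(\alpha^2-2\eta(X_0)^2)/\alpha$. This is the step that makes the subsequent use of (C-1) clean, and it is unavailable when $\alpha=0$. In fact a direct computation from (3.2) (using $\eta_2(\xi)=\eta_3(\xi)=0$ and the identities in (3.3)) gives $R_\xi(X_0)=\alpha^2\bigl(X_0-\eta(X_0)\xi\bigr)$, which vanishes identically when $\alpha=0$; so (C-1) at $X=\phi X_0$ carries no information there. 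The paper handles $\text{Int}(M\setminus\mathfrak U)$ directly from (2.10-1): since $\text{grad}\,\alpha=0$ there, one gets $\eta_1(\xi)\,\eta_1(\phi Y)=0$ for all $Y$, hence $\eta_1(\xi)\,\phi\xi_1=0$, forcing $\eta(X_0)\eta(\xi_1)=0$. Points on $\partial(M\setminus\mathfrak U)$ are then reached by continuity from $\mathfrak U$. Your outline omits this trichotomy.

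\textbf{(2) Your worry about a residual ``degenerate $\alpha$'' and the need for a second test vector such as $\phi\xi_2$ is unnecessary.} On $\mathfrak U$, once $A\phi X_0=\sigma\phi X_0$ is known, substituting $X=\phi X_0$ in (C-1) and using $\phi^2X_0=-X_0+\eta(X_0)\xi$ reduces the identity to $(\alpha-\sigma)\,R_\xi(X_0)=0$. Since $\alpha-\sigma=2\eta(X_0)^2/\alpha\neq 0$ and $R_\xi(X_0)=\alpha^2\bigl(X_0-\eta(X_0)\xi\bigr)$, one gets $X_0=\eta(X_0)\xi$, an outright contradiction independent of the value of $\alpha$. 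No auxiliary projection or second test vector is needed; the coefficient never degenerates on $\mathfrak U$.
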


\begin{proof}
To prove this lemma, without loss of generality, $\x$ may be written as
\begin{equation}\label{*}
\xi = \eta(\X)\X + \e(\xo)\xo
\tag{*}
\end{equation}
where $X_{0}$ (resp., $\xo$) is a unit vector in $\Q$ (resp., $\QP$) and $\e(\X)\e(\xo) \neq 0$.
\vskip 3pt
From \eqref{*} and $\p\x=0$, we have
\begin{equation}\label{eq: 3.3}
\left \{
\begin{aligned}
&\p\X=-\e(\xo)\po\X,\\
&\p\xo=\po\x=\e(\X)\po\X,\\
&\po\p\X=\eo(\x)\X.
\end{aligned}
\right.
\end{equation}

Let $\mathfrak U=\{p \in M\,|\, \alpha(p) \neq 0 \}$ be an open subset of $M$. From now on, we discuss our arguments on $\U$.
\noindent By virtue of Lemma~\ref{lemma 2.2}, $\x\al=0$ gives $A\X=\alpha \X$ and $A\xo=\al\xo$. The equation \eqref{eq: 2.11} yields $\alpha A\p X_0  = ({\alpha}^2-2{\eta}^2(X_0))\p X_0$
by substituting $X=X_0$.
Since $\alpha $ is non-vanishing on $\U$, it becomes
\begin{equation}\label{eq: 3.4}
A\p X_0  = \sigma\p X_0,
\end{equation}
where $\sigma=\frac{{\alpha}^2-2{\eta}^2(X_0)}{\alpha}$.

\noindent From \eqref{eq: 3.3} and \eqref{eq: 3.4}, we have
\begin{equation}\label{eq: 3.5}
\left \{
\begin{aligned}
&\RX(\X)=\al^2\X-\al^2\e(\X)\x,\\
&\RX(\xo)=\al^2\xo-\al^2\e(\xo)\x,\\
&\RX(\p\X)=\big(\al^2-4\e^2(\X)\big)\p\X.
\end{aligned}
\right.
\end{equation}

%
On $\mathfrak U$, substituting $X$ by $\p X_0$ into \eqref{C-1}, we have
\begin{equation}\label{eq: 3.6}
\X-\e(\X)\x=0,
\end{equation}

\noindent which is a contradiction. Therefore, $\U=\emptyset$, and thus it must be $p \in M-\U$. Since the set $M - \U=\text{Int}(M - \U) \cup \partial (M - \U)$, we consider the following two cases.
Here $\text{Int}$ (resp., $\partial$) denotes an interior (resp., the boundary) of $(M-\U)$.
\begin{itemize}
\item {{\bf Case 1.} $p \in \text{Int}(M-\U)$.}
\end{itemize}

\vskip 3pt
If $p \in \text{Int}(M-\U)$, then $\alpha = 0$. For this case, it was proved by the equation~\eqref{eq: 2.10-1}.

\vskip 7pt

\begin{itemize}
\item {{\bf Case 2.} $p \in \partial (M-\U)$.}
\end{itemize}

\vskip 2pt

Since $p \in \partial M-\U$, there exists a sequence of points $p_{n}$ such that $p_{n} \rightarrow p$ with $\al(p)=0$ and $\al(p_{n})\neq 0$. Such a sequence will have an infinite subsequence where $\eta(\xo)=0$ (in which case $\x \in \Q$ at $p$, by the continuity) or an infinite subsequence where $\eta(\X)=0$ (in which case $\x \in \Q^{\bot}$ at $p$).

\vskip 3pt
Accordingly, we get a complete proof of our lemma.
\end{proof}
%
\vskip 3pt
\noindent From Lemma~\ref{lemma 3.1}, we consider the case that $\x$ belongs to the distribution $\QP$. Thus without loss of generality, we may put $\x=\xo$.
\noindent Differentiating $\x=\xi_1$ along any direction $X \in TM$ and using \eqref{eq: 2.5} and \eqref{eq: 2.6}, it gives us
\begin{equation}\label{eq: 3.7}
2\eta_{3}(AX)\xt-2\eta_{2}(AX)\xh+\po AX-\p AX=0.
\end{equation}
\noindent Then, by using the symmetric (resp., skew-symmetric) property of the shape operator $A$ (resp., the structure tensor field $\p$), we also obtain
\begin{equation}\label{eq: 3.8}
2\eh(X)A\xt-2\et(X)A\xh+A\po X-A\p X=0.
\end{equation}

\noindent Applying $\po$ to \eqref{eq: 3.7}, it implies
\begin{equation}\label{eq: 3.9}
\begin{split}
2\eta_{3}(AX)\x_{3}+2\eta_{2}(AX)\x_{2}-AX+\al\e(X)\x-\po\p AX=0.
\end{split}
\end{equation}

\noindent On the other hand, replacing $X=\p X$ into \eqref{eq: 3.7}, we have
\begin{equation}\label{eq: 3.10}
\begin{split}
-2\eta_{2}(X)A\x_{2}-2\eta_{3}(X)A\x_{3}+A\po\p X-AX-\al\e(X)\x=0.
\end{split}
\end{equation}

\begin{lemma}\label{lemma 3.2}
Let $M$ be a Hopf hypersurface in $\NBt$, $m \geq 3$, with $R_{\xi}\p A=AR_{\xi}\p$. If the Reeb vector field $\x$ belongs to the distribution $\QP$,
then the shape operator $A$ commutes with the structure tensor field $\p$.
\end{lemma}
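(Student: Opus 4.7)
The plan is to combine the commuting hypothesis with equations \eqref{eq: 3.7} and \eqref{eq: 3.8} to produce the single operator identity $(\alpha A - I)[A,\phi] = 0$, and then to exploit the self-adjointness of $[A,\phi]$ to conclude that $A\phi = \phi A$.

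First I would substitute $X \mapsto \phi Y$ into \eqref{eq: 3.2}. Since $\xi = \xi_{1}$, the relations $\eta_{\nu}(\xi) = \delta_{\nu 1}$, $\eta(\phi Y) = 0$, $\phi_{1}\xi = 0$, $\phi_{2}\xi = -\xi_{3}$, $\phi_{3}\xi = \xi_{2}$, and $\phi^{2} Y = -Y + \eta(Y)\xi$ reduce the expansion to
\begin{equation*}
2R_{\xi}(\phi Y) = -\phi Y - \phi_{1}Y - 2\eta_{3}(Y)\xi_{2} + 2\eta_{2}(Y)\xi_{3} + 2\alpha A\phi Y.
\end{equation*}
Setting $Y = AX$ and using \eqref{eq: 3.7} to rewrite $\phi_{1}AX = \phi AX - 2\eta_{3}(AX)\xi_{2} + 2\eta_{2}(AX)\xi_{3}$, the ambient $\xi_{2},\xi_{3}$ terms telescope away and leave
\begin{equation*}
R_{\xi}(\phi AX) = -\phi AX + \alpha A\phi AX.
\end{equation*}
Applying $A$ to the $Y = X$ version of the same formula and eliminating $A\phi_{1}X$ via \eqref{eq: 3.8} in exactly the same way yields
\begin{equation*}
A R_{\xi}(\phi X) = -A\phi X + \alpha A^{2}\phi X.
\end{equation*}
The commuting hypothesis $R_{\xi}\phi A = A R_{\xi}\phi$ then forces $\alpha A[A,\phi] = [A,\phi]$, that is, $(\alpha A - I)[A,\phi] = 0$, where $[A,\phi] := A\phi - \phi A$.

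To finish, observe that $[A,\phi]$ is self-adjoint since $A$ is symmetric and $\phi$ is skew. On the closed set where $\alpha = 0$ the identity gives $[A,\phi] = 0$ at once. On $\U = \{p : \alpha(p) \neq 0\}$, taking the adjoint of $(\alpha A - I)[A,\phi] = 0$ yields $[A,\phi](\alpha A - I) = 0$, so $[A,\phi]$ vanishes on the orthogonal complement of the $(1/\alpha)$-eigenspace $V$ of $A$, while its range lies in $V$. For any $X \in V$, the range condition together with $[A,\phi]X = A\phi X - (1/\alpha)\phi X$ forces $(A - (1/\alpha)I)^{2}\phi X = 0$; self-adjointness of $A$ upgrades this to $\phi X \in V$, whence $[A,\phi]X = 0$. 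Hence $A\phi = \phi A$ on all of $M$.

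The main obstacle I anticipate is spotting the cancellation in the two specialization steps: the coefficient pattern $2\eta_{3}(\cdot)\xi_{2} - 2\eta_{2}(\cdot)\xi_{3}$ appearing in \eqref{eq: 3.7} and \eqref{eq: 3.8} must match precisely the one produced by the $R_{\xi}\phi$-expansion, and only then do the ambient $\xi_{2},\xi_{3}$ terms disappear to leave the clean operator identity. This matching relies on $\xi \in \QP$ (which makes $\eta_{\nu}(\xi) = \delta_{\nu 1}$); without it one would be forced into direct case analysis on $A\xi_{2}, A\xi_{3}$ and on the eigenspaces of $A|_{\Q}$, which is the route signaled by \eqref{eq: 3.9} and \eqref{eq: 3.10}.
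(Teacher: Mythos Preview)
Your proposal is correct and essentially follows the paper's approach. The reduction of (C-1), under $\xi=\xi_{1}$, to the operator identity $(\alpha A - I)[A,\phi]=0$ (which is exactly the paper's \eqref{eq: 3.12}) is identical, including the use of \eqref{eq: 3.7} and \eqref{eq: 3.8} to cancel the $\xi_{2},\xi_{3}$ terms.

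The only difference lies in the finishing linear-algebra step. The paper takes the adjoint to obtain $[A,\phi]A = A[A,\phi]$ on $\mathfrak U$, invokes simultaneous diagonalization of the two commuting self-adjoint operators $A$ and $[A,\phi]$, and kills each eigenvalue $\beta_{i}$ via $g([A,\phi]E_{i},E_{i})=2\lambda_{i}g(\phi E_{i},E_{i})=0$. You instead work directly with the $(1/\alpha)$-eigenspace $V$ of $A$: the identity and its adjoint show $\operatorname{ran}[A,\phi]\subset V$ and $[A,\phi]|_{V^{\perp}}=0$, and for $X\in V$ you get $(A-\tfrac{1}{\alpha}I)^{2}\phi X=0$, which self-adjointness of $A$ upgrades to $\phi X\in V$. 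Both arguments are short and valid; the paper's version has the advantage of fitting its announced simultaneous-diagonalization theme, while yours avoids an appeal to the spectral theorem and is slightly more self-contained.
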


\begin{proof}
Applying $\x=\xo$ into right hand side (resp., left hand side) of \eqref{C-1}, we get
\begin{equation*}
\begin{split}
2R_{\xi}\p AX&=-A\p X+2\alpha A^{2}\p X-2\eta_{3}(X)A\xt+2\eta_{2}(X)A\xh-A\p_{1}X,\\
2AR_{\xi}\p X&=-\p AX+2\alpha A\p A X-2\eta_{3}(AX)\xt+2\eta_{2}(AX)\xh-\phi_{1} AX.
\end{split}
\end{equation*}

\noindent Combining \eqref{eq: 3.7} and \eqref{eq: 3.8}, the above equations become
\begin{equation*}
\begin{split}
R_{\xi}\p AX&=-A\p X+\alpha A^{2}\p X,\\
AR_{\xi}\p X&=-\p AX+\alpha A\p A X.
\end{split}
\end{equation*}

\noindent Hence, \eqref{C-1} is equivalent to
\begin{equation}\label{eq: 3.12}
A\p-\p A=\al A(A\p-\p A)
\end{equation}

\noindent Taking the symmetric part of \eqref{eq: 3.12}, we have
\begin{equation}\label{eq: 3.13}
A\p-\p A=\al (A\p-\p A)A.
\end{equation}

\noindent From this, we can divide into the following three cases:
\vskip 3pt
First, let us consider an open subset $\mathfrak U=\{ p \in M \,|\, \alpha (p) \neq 0\}$ of $M$. Naturally we can apply \eqref{eq: 3.12} and \eqref{eq: 3.13} on the open subset $\U$.
\begin{equation*}
(A\p-\p A)AX=A(A\p-\p A)X.
\end{equation*}
Since the shape operator $A$ and the tensor $A\p-\p A$ are both symmetric operators and commute with each other, there exists a common orthonormal basis $\{E_i\}_{i=1,...,4m-1}$ which gives a simultaneous diagonalization.
Specifically, we have
\begin{eqnarray}
& AE_i=\lambda_i E_i,\label{eq: 3.14} \\
& (A\p-\p A )E_i=\beta_iE_i,\label{eq: 3.15}
\end{eqnarray}
where $\lambda_i$ and $\beta_i$ are scalars for all $i=1,2,...,4m-1$.

\vskip 3pt
Taking the inner product with $E_i$ into \eqref{eq: 3.15}, we have
\begin{equation}\label{eq: 3.16}
\beta_i g(E_i,E_i) =g\big((A\p-\p A)E_i,E_i\big)= 2\lambda_ig(\p E_i,E_i)=0.
\end{equation}
Since $g(E_i,E_i)=1$, $\beta_i=0$ for all $i=1,2,...,4m-1$. Hence $A\p X = \p AX$ for any tangent vector field $X$ on $\mathfrak U$.

\vskip 3pt

Next, if $p \in \text{Int}(M-\U)$, then $\alpha(p)=0$. From this, the equation~\eqref{eq: 3.13} gives $(A\p - \p A)X(p)=0$.

\vskip 3pt

Finally, let us assume that $p \in \partial (M-\U)$, where $\partial (M-\U)$ is the boundary of~$M-\U$. Then there exists a subsequence $\{p_{n}\} \subset \U$ such that $p_{n}\rightarrow p$. Since $(A \phi - \phi A)X(p_{n})=0$ on the open subset~$\U$ in $M$, by the continuity we also get $(A \phi - \phi A)X(p)=0$.

\vskip 3pt

Summing up these observations, it is natural that the shape operator $A$ commutes with the structure tensor field $\p$ under our assumption.
\end{proof}

By \cite{S5} we assert~$M$ with the assumptions given in lemma~\ref{lemma 3.2} is locally congruent to one of the following
hypersurfaces:
\begin{enumerate}
\item [$(\tu_{A})$] {a tube over a totally geodesic $\NBo$ in $\NBt$ or,}
\item [$(\ho_{A})$] {a horosphere in $\NBt$ whose center at infinity is singular and of type $JX \in {\mathfrak J}X$.}
\end{enumerate}

\vskip 3pt
In a paper due to \cite{S5}, Suh gave some information related to the shape operator~$A$ of $\tu_{A}$ and $\ho_{A}$ as follows:
\begin{pro A}\label{Proposition A}
Let $M$ be a connected real hypersurface in complex hyperbolic two-plane Grassamannian $SU_{2,m}/S(U_2U_m)$, $m \geq 3$. Assume that the maximal complex subbundle ${\mathcal C}$ of $TM$ and the maximal quaternionic subbundle ${\mathcal Q}$ of $TM$ are
both invariant under the shape operator of $M$. If $JN \in
{\mathfrak J}N$, then one of the following statements holds:
\begin{enumerate}
\item [$(\tu_{A})$] {$M$ has exactly four distinct constant principal curvatures
\begin{equation*}
\alpha = 2\coth(2r),\ \beta = \coth(r),\ \lambda_1 = \tanh(r), \ \lambda_2 = 0,
\end{equation*}
and the corresponding principal curvature spaces are
\begin{equation*}
T_\alpha = TM \ominus {\mathcal C},\ T_\beta = {\mathcal C} \ominus {\mathcal Q},\ T_{\lambda_1} = E_{-1},\ T_{\lambda_2} = E_{+1}.
\end{equation*}
The principal curvature spaces $T_{\lambda_1}$ and $T_{\lambda_2}$
are complex (with respect to $J$) and totally complex (with respect
to ${\mathfrak J}$).}
\item [$(\ho_{A})$]  {$M$ has exactly three distinct constant principal curvatures
\begin{equation*}
\alpha = 2,\ \beta = 1,\ \lambda = 0
\end{equation*}
with corresponding principal curvature spaces
\begin{equation*}
T_\alpha = TM \ominus {\mathcal C},\ T_\beta = ({\mathcal C} \ominus {\mathcal Q}) \oplus E_{-1},\ T_\lambda = E_{+1}.
\end{equation*}}
Here, $E_{+1}$ and $E_{-1}$ are the eigenbundles of $\p \po |_{\mathcal Q}$ with respect to the eigenvaleus $+1$ and $-1$, respectively.
\end{enumerate}
\end{pro A}

Since the symmetric tensor $A\p-\p A$ vanishes identically on $\tu_{A}$ (resp. $\ho_{A}$), it trivially satisfies \eqref{eq: 3.12}. Hence we assert that  $\tu_{A}$ (resp., $\ho_{A}$) in complex hyperbolic two-plane Grassmannians $\NBt$ has the our commuting condition \eqref{C-1} (see~\cite{S5}).

\vskip 3pt

Next, due to Lemma~\ref{lemma 3.1}, let us suppose that $\x \in \Q$ (i.e., $JN  \perp {\mathfrak J}N$).

\noindent By virtue of the result in \cite{S7}, we assert that a Hopf hypersurface $M$ in complex hyperbolic two-plane Grassmannians $\NBt$ satisfying the hypotheses in Theorem~$1$ is locally congruent to
\begin{enumerate}
\item [$(\tu_{B})$] {$M$ is an open part of a tube around a totally geodesic quaternionic hyperbolic space ${\mathbb H}H^n$ in $SU_{2,2n}/S(U_2U_{2n})$, $m = 2n$,}
\item [$(\ho_{B})$] {$M$ is an open part of a horosphere in
$SU_{2,m}/S(U_2U_m)$ whose center at infinity is singular and of
type $JN \perp {\mathfrak J}N$, or}
\item [$(\mathcal E)$] {The normal bundle $\nu M$ of $M$ consists of singular tangent vectors of type $JX \perp {\mathfrak J}X$,}
\end{enumerate}
when $\x\in\Q$. Hereafter, the model spaces of $\tu_{B}$, $\ho_{B}$ or $\mathcal E$ is denoted by $M_{B}$. Let us check whether the shape operator~$A$ of model spaces of $M_{B}$ satisfy our conditions, conversely. In order to do this, let us introduce the following proposition given by Suh \cite{S7}.
\begin{pro B}\label{Proposition B}
Let $M$ be a connected hypersurface in $SU_{2,m}/S(U_2U_m)$, $m \geq 3$. Assume that the maximal complex
subbundle ${\mathcal C}$ of $TM$ and the maximal quaternionic subbundle
${\mathcal Q}$ of $TM$ are both invariant under the shape operator of
$M$. If $JN \perp {\mathfrak J}N$, then one of the following statements
holds:
\begin{enumerate}
\item [$(\tu_{B}$)]{$M$ has five (four for $r = \sqrt{2}{\tanh}^{-1}(1/\sqrt{3})$ in which case $\alpha = \lambda_2$)
distinct constant principal curvatures
\begin{equation*}
\begin{split}
\alpha & = \sqrt{2}\tanh(\sqrt{2}r),\ \beta =
\sqrt{2}\coth(\sqrt{2}r),\ \gamma = 0, \\
\lambda_1 &= \frac{1}{\sqrt{2}}\tanh(\frac{1}{\sqrt{2}}r),\ \lambda_2 =
\frac{1}{\sqrt{2}}\coth(\frac{1}{\sqrt{2}}r),
\end{split}
\end{equation*}
and the corresponding principal curvature spaces are
\begin{equation*}
T_\alpha = TM \ominus \mathcal {C},\ T_\beta = TM \ominus \mathcal {Q}, \ T_\gamma = J(TM \ominus \mathcal {Q}) = JT_\beta.
\end{equation*}
The principal curvature spaces $T_{\lambda_1}$ and $T_{\lambda_2}$
are invariant under ${\mathfrak J}$ and are mapped onto each other
by $J$. In particular, the quaternionic dimension of
$SU_{2,m}/S(U_2U_m)$ must be even.}
\item [($\ho_{B})$] {$M$ has exactly three distinct constant principal curvatures
\begin{equation*}
\alpha = \beta = \sqrt{2},\ \gamma = 0,\ \lambda =
\frac{1}{\sqrt{2}}
\end{equation*}
with corresponding principal curvature spaces
\begin{equation*}
T_\alpha = TM \ominus (\mathcal {C} \cap \mathcal {Q}),\ T_\gamma =
J(TM \ominus \mathcal {Q}),\ T_\lambda = \mathcal {C} \cap \mathcal {Q} \cap J\mathcal {Q}.
\end{equation*}}
\item [$(\mathcal E)$] {$M$ has at least four distinct principal curvatures,
three of which are given by
\begin{equation*}
\alpha = \beta = \sqrt{2},\ \gamma = 0,\ \lambda =
\frac{1}{\sqrt{2}}
\end{equation*}
with corresponding principal curvature spaces
\begin{equation*}
T_\alpha = TM \ominus (\mathcal {C} \cap \mathcal {Q}),\ T_\gamma =
J(TM \ominus {\mathcal Q}),\ T_\lambda \subset \mathcal {C} \cap \mathcal {Q} \cap J\mathcal {Q}.
\end{equation*}
If $\mu$ is another (possibly nonconstant) principal curvature function, then $JT_{\mu} \subset T_\lambda$
and ${\mathfrak J}T_{\mu} \subset T_\lambda$. Thus, the corresponding multiplicities are
\begin{equation*}
m(\alpha) = 4,\quad  m(\gamma) = 3, \quad m(\lambda), \quad m(\mu).
\end{equation*}}
\end{enumerate}
\end{pro B}

Let us assume that the structure Jacobi operator~$R_{\xi}$ of $M_{B}$ satisfies the property~\eqref{C-1}.
The tangent space of $M_{B}$ can be splitted into
\begin{equation*}
TM=T_{\al_1}\oplus T_{\al_2}\oplus T_{\al_3} \oplus T_{\al_4} \oplus T_{\al_5},
\end{equation*}
where $T_{\al_1}=[\x]$, $T_{\al_2}=\text{span}\{\xo,\xtw,\xh\}$, $T_{\al_3}=\text{span}\{\p\xo,\p\xtw,\p\xh\}$ and $T_{\al_4} \oplus T_{\al_5}$ is the orthogonal complement of $T_{\al_1}\oplus T_{\al_2}\oplus T_{\al_3}$ in $TM$.
Since $\x \in \Q$ and $\p \PN \xi = \phi^{2}\xi_{\nu}=-\xi_{\nu}$, we have $R_{\xi}(\phi \xt)= -2\phi_{2}\xi$. From this and $\al_3=0$ for all $M_{B}$, our commuting condition \eqref{C-1} becomes
\begin{equation*}
R_{\xi}\phi A \xt - A R_{\xi} \phi \xt  = -2\al_2\p\xt.
\end{equation*}
It implies that the eigenvalue $\al_2$ vanishes, since $\phi \xt$ is a unit tangent vector field. But in Proposition B,
for $\tu_{B}$ (resp. $\ho_{B}$ or $\mathcal E$) we see that the eigenvalue $\al_2=\beta = \sqrt{2}\coth(\sqrt{2}r)$ (resp. $\al_2=\al= \frac{1}{\sqrt{2}}$) is non-vanishing. This gives us a contradiction.
%
%
%
%

\vskip 15pt

\section{Proof of Theorem 2}\label{section 4}
\setcounter{equation}{0}
\renewcommand{\theequation}{4.\arabic{equation}}
\vspace{0.13in}

In this section, by using geometric quantities in \cite{LSW}, \cite{LSW2}, \cite{PaSW}, \cite{S7}, \cite{S8}, and \cite{SW}, we give a complete proof of Theorem~$2$.
To prove it, we assume that $M$ is a Hopf hypersurface in $\NBt$ with commuting structure Jacobi operator and Ricci tensor, that is,
\begin{equation}
(\RXP) SX = S (\RXP)X.
\tag{C-2}
\end{equation}


\noindent From the definition of the Ricci tensor and the fundamental formulas in \cite[Section~$2$]{SW}, the Ricci tensor $S$ of $M$ in $\NBt$ is given by
\begin{equation}\label{eq: 4.2}
\begin{split}
2 SX & = -(4m+7)X + 3{\E}(X){\xi} + 2hAX - 2A^2X \\
& \quad + {\SN}\{ 3{\EN}(X){\xn} - {\EN}({\xi}){\pn}{\phi}X + {\EN}({\p}X){\pn}{\xi} + {\eta}(X){\EN}({\xi}){\xi}_{\nu}\},
\end{split}
\end{equation}
where $h$ denotes the trace of the shape operator $A$.

\vskip 5pt
Using equations \eqref{C-2} and \eqref{eq: 4.2}, we prove that the Reeb vector field $\x$ of $M$ belongs to either the distribution $\Q$ or the distribution $\QP$.
\begin{lemma}\label{lemma 4.1}
Let $M$ be a Hopf hypersurface in $\NBt$, $m \geq 3$, with \eqref{C-2}. If the principal curvature $\alpha=g(A\x, \x)$ is constant along the direction of $\x$, then $\x$ belongs to either the distribution~$\Q$ or the distribution~$\QP$.
\end{lemma}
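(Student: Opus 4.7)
\medskip

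\noindent\textbf{Proof proposal for Lemma 4.1.}
The plan is to mimic the strategy of Lemma~\ref{lemma 3.1}, replacing the shape operator~$A$ in the commuting identity~\eqref{C-1} by the Ricci operator~$S$ whose explicit form is recorded in~\eqref{eq: 4.2}. As before, I argue by contradiction: assuming $\xi \notin \Q$ and $\xi \notin \QP$, I write
\[
\xi = \eta(X_{0})X_{0} + \eta(\xi_{1})\xi_{1}, \qquad \eta(X_{0})\eta(\xi_{1}) \ne 0,
\]
with $X_{0}$ a unit vector in $\Q$ and $\xi_{1}$ a unit vector in $\QP$, and I work on the open set $\mathfrak U=\{p\in M\mid \alpha(p)\neq 0\}$.

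First, I would extract the eigenvalue data I already have. By Lemma~\ref{lemma 2.2} together with the assumption $\xi\alpha=0$, the vectors $X_{0}$ and $\xi_{1}$ are principal, namely $AX_{0}=\alpha X_{0}$ and $A\xi_{1}=\alpha\xi_{1}$. Feeding $X=X_{0}$ into~\eqref{eq: 2.11} and using the identities in~\eqref{eq: 3.3} yields, exactly as in~\eqref{eq: 3.4}, that $A\phi X_{0} = \sigma\phi X_{0}$ with $\sigma = (\alpha^{2}-2\eta^{2}(X_{0}))/\alpha$ on $\mathfrak U$. With these principal data at hand, I would then use~\eqref{eq: 3.2} to compute $R_{\xi}\phi X_{0}$ explicitly; it is a linear combination of $\phi X_{0}$ and the three vectors $\xi_{\nu}$, and in particular a principal vector for $A$.

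Next, I would compute both sides of~\eqref{C-2} evaluated at $X=\phi X_{0}$. On one hand, using the expression for $R_{\xi}\phi X_{0}$, the left-hand side $S R_{\xi}\phi X_{0}$ is obtained directly from~\eqref{eq: 4.2}; since $A$ acts diagonally on each of the contributing vectors, the terms $2hA R_{\xi}\phi X_{0}-2A^{2}R_{\xi}\phi X_{0}$ collapse to scalars times the same vectors. On the other hand, plugging $X=\phi X_{0}$ into~\eqref{eq: 4.2} gives $S\phi X_{0}$, and then applying $R_{\xi}\phi$ to it reduces, through~\eqref{eq: 3.2} and~\eqref{eq: 3.3}, to another combination of $\phi X_{0}$, $\xi_{\nu}$, and $\phi \xi_{\nu}$. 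Equating the two and isolating the component along $X_{0}$ (equivalently, taking the $\xi$--component after projecting away $\xi$) I expect, after cancellations using $\eta(X_{0})^{2}+\eta(\xi_{1})^{2}=1$, an identity of the form $c\,(X_{0}-\eta(X_{0})\xi)=0$ with a coefficient $c$ that is manifestly non-zero on $\mathfrak U$; this parallels~\eqref{eq: 3.6} and yields the desired contradiction.

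Finally, because $\mathfrak U=\emptyset$ means $p\in M-\U = \mathrm{Int}(M-\U) \cup \partial(M-\U)$, I would close the argument by the same dichotomy used in Lemma~\ref{lemma 3.1}: on $\mathrm{Int}(M-\U)$ one has $\alpha\equiv 0$, and the conclusion follows from~\eqref{eq: 2.10-1}; while for $p\in\partial(M-\U)$ a sequential continuity argument produces either an infinite subsequence with $\eta(\xi_{1})=0$ (so $\xi\in\Q$ at $p$) or one with $\eta(X_{0})=0$ (so $\xi\in\QP$ at $p$). The main obstacle I anticipate is purely computational: because $S$ carries both the $-2A^{2}+2hA$ block and the quaternionic correction $\sum_{\nu}\{3\eta_{\nu}(X)\xi_{\nu} - \eta_{\nu}(\xi)\phi_{\nu}\phi X + \eta_{\nu}(\phi X)\phi_{\nu}\xi + \eta(X)\eta_{\nu}(\xi)\xi_{\nu}\}$, the bookkeeping needed to track the coefficients along $X_{0}$, $\xi_{1}$, $\phi X_{0}$, $\phi \xi_{1}$, $\xi_{2}$, $\xi_{3}$ after commuting $R_{\xi}\phi$ past $S$ is considerably heavier than in the shape-operator case, and one must choose the test vector (I expect $\phi X_{0}$) with care so that the resulting identity has a single surviving component that forces the contradiction.
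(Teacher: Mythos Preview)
Your proposal is correct and follows essentially the same route as the paper: test \eqref{C-2} at $X=\phi X_{0}$ on $\mathfrak U$, use the principal data $AX_{0}=\alpha X_{0}$, $A\xi_{1}=\alpha\xi_{1}$, $A\phi X_{0}=\sigma\phi X_{0}$ to force a contradiction, and finish with the same interior/boundary dichotomy as in Lemma~\ref{lemma 3.1}. The paper streamlines your computation by first observing from \eqref{eq: 4.2} that $S\phi X_{0}=\kappa\,\phi X_{0}$ is an eigenvector, so \eqref{C-2} at $\phi X_{0}$ collapses to $\kappa\,R_{\xi}(X_{0})=S\,R_{\xi}(X_{0})$, and then pairing with $\xi$ (using \eqref{eq: 3.5} and the formula for $S\xi$) yields $-2\alpha^{2}\eta(\xi_{1})^{2}\eta(X_{0})=0$ directly, avoiding the heavier bookkeeping you anticipate.
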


\begin{proof}
In order to prove this lemma, for some unit vectors $X_{0} \in \Q$, $\x_{1} \in \QP$, we put
\begin{equation*}
\x = \eta(X_{0})X_{0}+\eta(\x_{1})\x_{1},
\tag{*}
\end{equation*}
where $\eta(X_{0})\eta(\x_{1})\neq 0$ is the assumption we will disprove in this proof by contradiction.
\vskip 5pt
Let $\mathfrak U=\{p \in M\,|\, \alpha(p) \neq 0 \}$ be the open subset of $M$. From now on, we discuss our arguments on $\U$.

By virtue of Lemma~\ref{lemma 2.2}, $\x\al=0$ gives $A\X=\alpha \X$ and $A\xo=\al\xo$. From \eqref{eq: 4.2}, we have
\begin{equation}\label{eq: 4.3-1}
\left \{
\begin{aligned}
& S\p\X=\ka\p\X,\\
& S\X=(-2m-4+h\al-\al^2)\X+2\e(\X)\x,\\
& S\xo=(-2m-2+h\al-\al^2)\xo+2\eo(\x)\x,\\
& S\x=(-2m-2+h\al-\al^2)\x+2\eo(\x)\xo,
\end{aligned}
\right.
\end{equation}
where $\ka:=-2m-4+h\si-\si^2$ and $\sigma=\frac{\al^2-2\e^2({\X})}{\al}$ on $\mathfrak U$.\\

\noindent Put $X=\p\X$ into \eqref{C-2}, we have
\begin{equation}\label{eq: 4.5}
\ka \RX(\X)=S\RX(\X).
\end{equation}
\noindent Taking the inner product of \eqref{eq: 4.5} with $\x$ and using \eqref{eq: 3.5} and \eqref{eq: 4.3-1}, we have $-2\al^2\e^2(\xo)\e(\X)=0$.
It implies that $\U=\emptyset$. Thus it must be $p \in M-\U$. The set $M - \U=\text{Int}(M - \U) \cup \partial (M - \U)$, where $\text{Int}$ (resp., $\partial$)
 denotes the interior (resp., the boundary) of $M-\U$, we consider the following two cases:

\vskip 3pt

\begin{itemize}
\item {{\bf Case 1.} $p \in \text{Int}(M-\U)$}
\end{itemize}

\vskip 2pt
If $p \in \text{Int}(M-\U)$, then $\alpha = 0$. Our lemma was proved on $\text{Int}(M-\U)$ by the equation \eqref{eq: 2.10-1} and \eqref{*}.

\vskip 7pt

\begin{itemize}
\item {{\bf Case 2.} $p \in \partial (M-\U)$}
\end{itemize}

\vskip 2pt

Since $p \in \partial (M-\U)$, there exists a sequence of points $p_{n}\in\U$ such that $p_{n} \rightarrow p$ with $\al(p)=0$ and $\al(p_{n})\neq 0$. Such a sequence will have an infinite subsequence where $\eta(\xo)=0$ (in which case $\x \in \Q$ at $p$, by the continuity) or an infinite subsequence where $\eta(\X)=0$ (in which case $\x \in \Q^{\bot}$ at $p$). Accordingly, we get a complete proof of the Lemma.


\end{proof}

Now, we shall divide our consideration into two cases that $\x$ belongs to either the distribution $\Q$ or the distribution $\QP$, respectively.
Let us consider the case $\x \in \QP$. We may put $\x=\x_{1}\in \QP$ for the sake of convenience. Then, \eqref{eq: 4.2} is simplified:
\begin{equation}\label{eq: 4.9}
\begin{split}
2SX&=-(4m+7)X+7\e(X)\x+2\etw(X)\xtw\\
  &\quad+2\eh(X)\xth-\po\p X+2h AX-2A^2X.
\end{split}
\end{equation}

By replacing $X$ as $AX$ into \eqref{eq: 4.9} and using \eqref{eq: 3.9}, we obtain
\begin{equation}\label{eq: 4.10}
2SAX=-(4m+6)AX+6\al\e(X)\x+2h A^2X-2A^3X
\end{equation}
Applying the shape operator $A$ to \eqref{eq: 4.9} and using \eqref{eq: 3.10}, we get
\begin{equation}\label{eq: 4.11}
2ASX=-(4m+6)AX+6\al\e(X)\x+2h A^2X-2A^3X.
\end{equation}
\noindent From \eqref{eq: 4.10} and \eqref{eq: 4.11}, we see that the Ricci tensor $S$ commutes with the shape operator $A$, that is,
\begin{equation}\label{eq: 4.12}
SA=AS.
\end{equation}



On the other hand, the equations \eqref{eq: 3.7} and \eqref{eq: 4.9} give us
\begin{equation}\label{eq: 4.13}
\begin{split}
&2\eta_{3}(SX)\x_{2}-2\eta_{2}(SX)\x_{3}+\p_{1} SX-\p SX \\
& \quad \quad \,\, = (2m+4)\{2\eh(X)\xtw-2\etw(X)\xth+\p X-\po X\} \\
& \quad \quad := \text{Rem}(X).
\end{split}
\end{equation}
Taking the symmetric part of \eqref{eq: 4.13}, we obtain
\begin{equation}\label{eq: 4.13-1}
2\eta_{3}(X)S\x_{2}-2\eta_{2}(X)S\x_{3}+S\p_{1}X - S\p X = \text{Rem}(X).
\end{equation}

\begin{lemma}\label{lemma 4.2}
Let $M$ be a Hopf hypersurface in
$\NBt$ with \eqref{C-2}. If $\x \in \QP$, then $S\p=\p S$.
\end{lemma}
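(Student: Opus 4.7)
The plan is to follow the same template as in the proof of Lemma~\ref{lemma 3.2}. On the open subset $\U = \{p \in M \mid \al(p) \ne 0\}$ I will derive the operator identity
\begin{equation*}
\p S - S\p = \al A(\p S - S\p),
\end{equation*}
and then conclude $\p S = S\p$ on $\U$ by simultaneous diagonalization. Points in $\text{Int}(M\setminus\U)$ satisfy $\al = 0$, so the same derivation collapses to $\p S - S\p = 0$ directly, while on $\partial(M\setminus\U)$ the conclusion follows by continuity, exactly as at the end of the proof of Lemma~\ref{lemma 3.2}.

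To derive the operator identity on $\U$, I first specialize $\x = \xo$ in~\eqref{eq: 3.2} to obtain
\begin{equation*}
2R_{\x}\p Y = -(\p+\po)Y - \bigl(2\eh(Y)\xt - 2\et(Y)\xh\bigr) + 2\al A\p Y.
\end{equation*}
Plugging this into each side of $R_{\x}\p(SX) = S R_{\x}\p(X)$ from~\eqref{C-2}, and using $SA = AS$ from~\eqref{eq: 4.12} to rewrite $SA\p X$ as $AS\p X$, I arrive at an equation whose non-commuting part reads
\begin{equation*}
\bigl[S(\p+\po) - (\p+\po)S\bigr]X + \bigl[S\Xi - \Xi S\bigr](X) + 2\al A(\p S - S\p)X = 0,
\end{equation*}
with $\Xi(X) := 2\eh(X)\xt - 2\et(X)\xh$. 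Setting $\Phi(X) := \po X - \p X$, both~\eqref{eq: 4.13} and~\eqref{eq: 4.13-1} equate $(\Xi+\Phi)S$ and $S(\Xi+\Phi)$ to $(2m+4)(\Xi-\Phi)$, so $S\Xi - \Xi S = \Phi S - S\Phi$. The first two brackets in the display therefore telescope to $-2(\p S - S\p)$, and combining with the last term yields the claimed identity $\p S - S\p = \al A(\p S - S\p)$.

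Since $\p^{\ast} = -\p$ and $S^{\ast} = S$, the commutator $\p S - S\p$ is symmetric. Taking the adjoint of the identity also gives $\p S - S\p = \al(\p S - S\p)A$, so $A$ and $\p S - S\p$ commute on $\U$. Choosing a common orthonormal eigenbasis $\{E_i\}_{i=1}^{4m-1}$ with $AE_i = \lambda_i E_i$ and $(\p S - S\p)E_i = \mu_i E_i$, I compute
\begin{equation*}
\mu_i = g\bigl((\p S - S\p)E_i,\,E_i\bigr) = -2\,g(\p E_i,\,SE_i).
\end{equation*}
Substituting the formula~\eqref{eq: 4.9} for $SE_i$: the $E_i$, $AE_i$, $A^2 E_i$ and $\x$ contributions all vanish because $g(\p E_i,E_i)=0$ and $g(\p E_i,\x)=0$; the $\xt$ and $\xh$ contributions cancel since $g(\p E_i,\xt) = \eh(E_i)$ and $g(\p E_i,\xh) = -\et(E_i)$; and the $\po\p$ term contributes nothing, using $\p\po = \po\p$ (valid since $\x = \xo$), the identity $g(\p X,\p Y) = g(X,Y)-\eta(X)\eta(Y)$, and $\po\x = 0$. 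Thus $\mu_i = 0$ for every $i$, so $\p S = S\p$ on $\U$.

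The main obstacle will be that last cancellation: a symmetric operator commuting with $A$ need not vanish from simultaneous diagonalizability alone, so one must genuinely exploit the explicit form~\eqref{eq: 4.9} of the Ricci tensor together with the identities $\p\xt = -\xh$, $\p\xh = \xt$ and $\p\po = \po\p$, all of which rely on $\x = \xo \in \QP$. Once that is in place, $M\setminus\U$ is handled quickly: on $\text{Int}(M\setminus\U)$ the $\al$-term drops out and the same telescoping forces $\p S - S\p = 0$ directly, while on $\partial(M\setminus\U)$ one approximates by points of $\U$ and passes to the limit, just as in Lemma~\ref{lemma 3.2}.
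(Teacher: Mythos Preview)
Your proof is correct and follows essentially the same route as the paper: you derive $(S\phi-\phi S)=\alpha A(S\phi-\phi S)$ from \eqref{C-2} via \eqref{eq: 4.12}, \eqref{eq: 4.13}, \eqref{eq: 4.13-1}, take adjoints to get commutation with $A$, simultaneously diagonalize on $\U$, and close off on $M\setminus\U$ by $\alpha=0$ and continuity. The only cosmetic difference is that the paper first simplifies $S\phi-\phi S$ to $hA\phi-A^2\phi-h\phi A+\phi A^2$ (its \eqref{eq: 4.20}) and reads off $\beta_i=0$ in one line from $g(\phi E_i,E_i)=0$, whereas you instead compute $\mu_i=-2g(\phi E_i,SE_i)$ and kill each term of \eqref{eq: 4.9} separately; both arguments are equivalent.
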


\begin{proof}
By virtue of equation~\eqref{eq: 4.13} and \eqref{eq: 4.13-1}, we obtain
the left and right sides of \eqref{C-2}, respectively, as follows:
\begin{equation*}\label{eq: 4.14}
\begin{split}
2\RXP SX & = -\p SX + 2 \al A\p SX - 2\eta_{3}(SX)\x_{2}+2\eta_{2}(SX)\x_{3} - \p_{1} SX \\
         & = -2 \p SX + 2 \al A\p SX - \text{Rem}(X), \\
\end{split}
\end{equation*}
and
\begin{equation*}
\begin{split}
2 S\RXP X  &= - S\p X + 2 \al S A\p  X - 2\eta_{3}(X)S\x_{2}+2\eta_{2}(X)S\x_{3} - S\p_{1} X \\
           &= -2 S \p X + 2 \al SA\p X - \text{Rem}(X).
\end{split}
\end{equation*}
That is,
\begin{equation}\label{eq: 4.14}
\RXP SX = - \p SX + \al A\p SX -\frac{1}{2}\text{Rem}(X)
\end{equation}
and
\begin{equation}
\begin{split}
S\RXP X=-S\p X+\al SA\p X-\frac{1}{2}\text{Rem}(X).
\end{split}
\end{equation}

\noindent From these two equations, the condition~\eqref{C-2} is equivalent to
\begin{equation}\label{eq: 4.16}
\begin{split}
(S\p -\p S)X & = \al (SA\p -A\p S)X \\
             & = \al A (S\p - \p S)X,
\end{split}
\end{equation}
by virtue of our assertion that the shape operator~$A$ commutes the Ricci tensor~ $S$ with each other given in \eqref{eq: 4.12}.
\vskip 3pt


\noindent Taking the symmetric part of \eqref{eq: 4.16}, we have
\begin{equation}\label{eq: 4.17}
(S\p -\p S)X=\al (S\p-\p S)AX
\end{equation}
for all tangent vector fields $X$ on $M$.
\vskip 3pt

\noindent From \eqref{eq: 4.16} and \eqref{eq: 4.17}, we know
\begin{equation}\label{eq: 4.17-1}
\al A(S\p-\p S)=\al(S\p-\p S)A.
\end{equation}
\vskip 3pt
Let $\mathfrak U=\{ p \in M \,|\, \alpha (p) \neq 0\}$ be an open subset of $M$.
Then \eqref{eq: 4.17-1} implies the shape operator $A$ and the symmetric tensor $S\p-\p S$ commute with each other on $\U$. Hence they are simultaneous diagonalizable, there exists a common orthonormal basis $\{E_1,E_2,...,E_{4m-1}\}$ such that the shape operator $A$ and the tensor $S\p-\p S$  both can be diagonalizable.
In other words,
\begin{eqnarray}
& AE_i=\lambda_i E_i,\label{eq: 4.18} \\
& (S\p-\p S)E_i=\beta_iE_i,\label{eq: 4.19}
\end{eqnarray}
where $\lambda_i$ and $\beta_i$ are scalars for all $i=1,2,...4m-1$.

\vskip 3pt

Combining equations in \eqref{eq: 4.2}, we get
\begin{equation}\label{eq: 4.20}
S\p X - \p SX = h A\p X - A^{2} \p X - h \p AX + \p A^{2}X.
\end{equation}
\indent Using \eqref{eq: 4.18}, \eqref{eq: 4.19} and \eqref{eq: 4.20}, we obtain
\begin{equation}\label{eq: 4.21}
(S\p-\p S)E_i  = h A\p E_i - A^{2} \p E_i - h \lambda_i\p E_i +  \lambda_i^{2}\p E_i.
\end{equation}

Taking the inner product with $E_i$ into \eqref{eq: 4.21}, we have
\begin{equation*}
\begin{split}
\beta_i g(E_i,E_i)= h\lambda_i g(\p E_i,E_i ) - \lambda^{2}_{i}g(\p E_i,E_i)-h\lambda_i g(\p E_i,E_i ) + \lambda^{2}_{i}g(\p E_i,E_i)= 0.
\end{split}
\end{equation*}
\noindent Since $g(E_i,E_i)=1$, we get $\beta_i=0$ for all $i=1,2,...,4m-1$. This is equivalent to $(S\p-\p S)E_i=0$ for all $i=1,2,...,4m-1$. It follows that $S\p X = \p SX$ for any tangent vector field $X$ on $\U$.
Next, if $p \in \text{Int}(M-\U)$, then we see that $\alpha(p)=0$. From this, the equation~\eqref{eq: 4.16} gives $(S\p - \p S)$ vanishes identically on $\text{Int}(M-\U)$.

\vskip 3pt

Finally, let us assume that $p \in \partial (M-\U)$, where $\partial (M-\U)$ is the boundary of~$M-\U$. Then there exists a subsequence $\{p_{n}\} \subset \U$ such that $p_{n}\rightarrow p$. Since $(S \phi - \phi S)X(p_{n})=0$ on the open subset~$\U$ in $M$, by the continuity we also get $(S \phi - \phi S)X(p)=0$.
\end{proof}

\vskip 3pt

%

By virtue of the result given by Suh in \cite{S8}, we assert that
{\it if $\x\in\QP$, then a Hopf hypersurface $M$ in $\NBt$ with \eqref{C-2} is locally congruent to one of the following hypersurfaces:
\begin{enumerate}
\item [$(\tu_{A})$] {a tube over a totally geodesic $\NBo$ in $\NBt$ or,}
\item [$(\ho_{A})$] {a horosphere in $\NBt$ whose center at infinity is singular and of type $JX \in {\mathfrak J}X$.}
\end{enumerate}}


%

\noindent Moreover, when $\x \in \QP$, \eqref{C-2} is equivalent to \eqref{eq: 4.16}. Since the symmetric tensor $(S\p-\p S)$ vanishes identically on $\tu_{A}$ (resp. $\ho_{A}$), it trivially satisfies \eqref{eq: 4.16}. Hence we assert that  $\tu_{A}$ (resp., $\ho_{A}$) in complex hyperbolic two-plane Grassmannians $\NBt$ has the our commuting condition \eqref{C-2} (see~\cite{S8}).
\vskip 3pt

When $\x \in \Q$, a Hopf hypersurface $M$ in $\NBt$ with \eqref{C-2} is locally congruent to a hypersurface of $M_{B}$ by \cite{S7}.
 From now on, let us show whether model spaces of $M_{B}$ satisfy the condition \eqref{C-2} or not. Then the tangent space of $M_{B}$ can be
splitted into
\begin{equation*}
TM_{B}=T_{\al_1}\oplus T_{\al_2}\oplus T_{\al_3} \oplus T_{\al_4} \oplus T_{\al_5}.
\end{equation*}
where $T_{\al_1}=[\x]$, $T_{\al_2}=\text{span}\{\xo,\xtw,\xh\}$, $T_{\al_3}=\text{span}\{\p\xo,\p\xtw,\p\xh\}$ and $T_{\al_4} \oplus T_{\al_5}$
is the orthogonal complement of $T_{\al_1}\oplus T_{\al_2}\oplus T_{\al_3}$ in $TM$ such that $JT_{\al_5}\subset T_{\al_4}$ (see \cite{S8}).

On $T_{p}M_{B}$, $p \in M_{B}$, the equations \eqref{eq: 4.2} and \eqref{eq: 3.2} are reduced to the following equations, respectively:
\begin{equation*}
\begin{split}
2SX&=-(4m+7)X + 3\eta(X)\x+ 2hAX - 2A^2 X\\
   &\quad\quad + \sum_{\nu =1}^3 \{ 3 \eta_{\nu}(X)\x_{\nu} +\eta(\p_{\nu}X)\p_{\nu}\x \},\\
2R_{\x}(X)&= -X + {\eta}(X){\x}+2{\alpha}AX-2\al^2{\eta}(X){\x}\\
          &\quad\quad+\SN\{{\EN}(X){\XN}+3{\EN}(\p X){\PNK}\}.
\end{split}
\end{equation*}

\indent From \cite[Proposition 5.1]{S8}, we obtain the following
\begin{equation}\label{eq: 4.22}
SX = \left\{ \begin{array}{ll}
                (-2m-2+h\al_1-\al_1^2)\x  & \mbox{if}\ \  X=\x \in T_{\al_1}\\
                (-2m-2+h\al_2-\al_2^2)\x_{\ell}  & \mbox{if}\ \  X=\x_{\ell} \in T_{\al_2} \\
                (-2m-4)\p\x_{\ell}  & \mbox{if}\ \ X=\p \x_{\ell} \in T_{\al_3}\\
                (-2m-\frac{7}{2}+h\al_4-\la_4^2) X  & \mbox{if}\ \  X \in T_{\al_4}\\
                (-2m-\frac{7}{2}+h\al_5-\al_5^2) X      & \mbox{if}\ \  X \in T_{\al_5}\\
\end{array}\right.
\end{equation}

\begin{equation}\label{eq: 4.23}
\RX (X) = \left\{ \begin{array}{ll}
                0                                       & \mbox{if}\ \  X=\x \in T_{\al_1}\\
                \al_1\al_2\x_{\ell} & \mbox{if}\ \  X=\x_{\ell} \in T_{\al_2} \\
                (-2+\al_1\al_3)\p\x_{\ell} & \mbox{if}\ \ X=\p \x_{\ell} \in T_{\al_3}\\
                (-\frac{1}{2}+\al_1\al_4) X & \mbox{if}\ \  X \in T_{\al_4}\\
                (-\frac{1}{2}+\al_1\al_5) X     & \mbox{if}\ \  X \in T_{\al_5}.
\end{array}\right.
\end{equation}
%
%



In order to check whether $\tu_{B}$, $\ho_{B}$ or $\mathcal E$ model spaces satisfy the \eqref{C-2} or not,  we should verify the following equations vanishes for all cases.

\begin{equation}\label{eq: 4.24}
G(X):= (\RXP)SX - S(\RXP)X.
\end{equation}

Putting $X=\xo\in T_{\al_3}$ into \eqref{eq: 4.24}, we have $G(\xo)=-2(2+\al_2h-{\al_2}^2)\p\xo$ which derives
\begin{equation}\label{eq: 4.25}
2+\al_2h-{\al_2}^2=0.
\end{equation}

\begin{itemize}
\item {{\bf Case 1.} Tube $\tu_{B}$}
\end{itemize}
In this case, we get $\al_1=\al$, $\al_2=\beta$, $\al_3=\gamma=0$, $\al_4=\lambda$ and $\al_5=\mu$.

\indent By calculation, we have $\lambda+\mu=\be$ on $\tu_{B}$. Thus we obtain $h=\al+3\beta+(4n-4)(\la+\mu) =\al+(2m-1)\beta$.
Then \eqref{eq: 4.25} is $4+2(m-1)\be^2>0$, which is a contradiction.

\begin{itemize}
\item {{\bf Case 2.} Horoshere $\ho_{B}$}
\end{itemize}
On $\ho_{B}$, $\al_1=\sqrt{2}$, $\al_2=\sqrt{2}$, $\al_3=\gamma=0$, $\al_4=\frac{1}{\sqrt{2}}$ and $\al_5=\frac{1}{\sqrt{2}}$.
Thus \eqref{eq: 4.25} gives $h=0$.
Since $h=\al_1+3\al_2+3\al_3+(4n-4)(\al_4+\al_5)$, we have $2\sqrt{2}m=0$ which is a contradiction.

\begin{itemize}
\item {{\bf Case 3.} Exceptional case $\mathcal E$}
\end{itemize}
For $X\in T_{\al_5}\subset T_{\mathcal E}$, $G(X)=-\frac{1}{2}(\al_5-\al_4)(\al_5+\al_4)\p X$. On $T_{\mathcal E}$ we have $\al_1=\alpha=\sqrt{2}$, $\al_4=\lambda=\frac{1}{\sqrt{2}}$ and $\al_5=\mu = \pm\frac{1}{\sqrt{2}}$. Because $\mu \neq \lambda$, it should be $\mu=-\frac{1}{\sqrt{2}}$. Moreover, since $J T_{\mu} \subset T_{\lambda}$ and $\mathfrak J T_{\mu} \subset T_{\lambda}$, we see that the corresponding multiplicities of the eigenvalues $\lambda$ and $\mu$ satisfy $m(\lambda) \geq m(\mu)$. Since $m(\al)=4$, $m(\gamma)=3$ and $m(\lambda)+ m(\mu)=4m-8$ on $\mathcal E$, the trace of the shape operator~$A$ denoted by $h$ becomes $h=4\alpha + 3\gamma + m(\lambda)\lambda + m(\mu) \mu = 4\sqrt{2} + \frac{1}{\sqrt{2}} (m(\lambda) - m(\mu))$, which makes a contradiction. In fact, since we obtained $h=0$ on $T_{\gamma} \in T \mathcal E$, it yields $(m(\lambda) - m(\mu))=-8 < 0$. Thus,
this case does not occur.

This shows that hypersurfaces of $\tu_{B}$, $\ho_{B}$ or $\mathcal E$ cannot satisfy the condition \eqref{C-2}, and therefore in the situation of Theorem~$2$, the case $X\in\Q$ cannot occur. This completes the proof of Theorem~$2$.
\vskip 20pt


\end{document}